\newcommand{\be}{\begin{equation}}
\newcommand{\ee}{\end{equation}}
\newtheorem{theorem}{Theorem}[section]
\newtheorem{defi}[theorem]{Definition}
\newtheorem{prop}[theorem]{Proposition}
\newtheorem{remark}[theorem]{Remark}
\newtheorem{ex}[theorem]{Example}
\newtheorem{com}[theorem]{Comments}
\begin{document} 
\title{SHEAVES IN ELEMENTARY MATHEMATICS:\\ T\lowercase {he case of positive integer numbers}}
\author{Joaqu\'in Luna-Torres}
\thanks{ {\bf --------------------------------}\\ The author would like to thank  Fernando Zalamea, professor of {\it Universidad Nacional de Colombia}, for his comments and  many valuable suggestions that led to an improvement of this work.}
\dedicatory{Universidad Distrital Francisco Jos\'e de Caldas}

\email{jluna@ima.usergioarboleda.edu.co}
\subjclass[2010]{97F40, 18B10, 03C90, 18F10}
\keywords{Positive integer numbers, partially ordered sets, lattices, Heyting algebras, categories, Grothendieck topologies, sieves, sheaves, subobject classifiers}
\begin{abstract}
We aim to use the concept of ``sheaf"  to establish a link between certain aspects of the set of positive integers numbers, a topic corresponding to the elementary mathematics, and some fundamental ideas of  contemporary mathematics.

We hope that this type of approach helps the school students to restate some problems of elementary mathematics in  an environment deeper and suitable  for its study.
\end{abstract}
\maketitle 
\baselineskip=1.7\baselineskip
\section{ Introduction}
In this paper, we aim to use the concept of ``sheaf"   to establish a link between certain aspects of the set of positive integers numbers, a topic corresponding to the elementary mathematics, and some fundamental ideas of  contemporary mathematics.

We hope that this type of approach helps the school students to restate some problems of elementary mathematics in an  environment deeper and suitable  for its study.

The paper is organized as follows: We describe, in section $2$, some orders on the set of positive integers $\mathbb Z^{+}=\{1, 2, 3, \cdots\}$ and we introduce the basic framework on partially ordered sets. In section $3$, following  S. MacLane and  I. Moerdijk  (see \cite{MM}), we present the concepts of lattices, Heyting algebras and categories; after that, in section $4$ we construct some  Grothendieck topologies on  the category $\mathbb Z^{+}_D$ and its sheaves,then  in section $5$ we study  some  subobject classifiers for the sites $(\mathbb Z^{+}_D, J)$.  Finally in section $6$ we present some categories equivalent to $\mathbb Z^{+}_D$

\section{Preliminaries}
In this section, our main aim is to describe some orders on the set of positive integers $\mathbb Z^{+}=\{1, 2, 3, \cdots\}$.
\begin{defi}
The multiplicative partial ordering on $\mathbb Z^{+}$ is defined as follows: for all $a,b\in \mathbb Z^{+}$,\ $a\prec b$ if and only if there exists $c\in \mathbb Z^{+}$ such that $a=bc$.
\end{defi}
From now on, $\mathbb Z^{+}_{M}$ denotes the partially ordered set $(\mathbb Z^{+}, \prec)$.
\begin{defi}
The divisibility  partial ordering on $\mathbb Z^{+}$ is defined as follows: for all $k,n\in \mathbb Z^{+}$,\ $k\leqslant n$ if and only if there exists $t\in \mathbb Z^{+}$ such that $n=kt$.
\end{defi}
We will denote by $\mathbb Z^{+}_{D}$ the partially ordered set $(\mathbb Z^{+}, \leqslant)$.

From A. J. Lindenhovius  (\cite{AL}), we take the following ideas
\begin{defi}
Let $(P,\leqslant)$ be a partially ordered set and $M \subseteq P$. We say that $M$ is an up-set if for each $x \in M$ and $y \in P$ we have $x \leqslant y$ implies $y \in M$. Similarly, $M$ is called a down-set if for each $x \in M$ and $y\in P$ we have $y\leqslant x$ implies $y \in M$. 

Given an element $x \in P$, we define the up-set and down-set generated by $x$ by $\uparrow x = \{y \in P : x\leqslant y\}$ and
$\downarrow x = \{y \in P : y \leqslant x\}$, respectively.

We can also define the up-set generated by a subset $M$ of $P$ by\linebreak $\uparrow M = \{x \in P : m \leqslant x \,\ \text{for some}\,\ m \in M\} =\bigcup_{m\in M} \uparrow m$, and similarly, we define the down-set generated by $M$ by $\downarrow M =\bigcup_{m\in M} \downarrow m$.

 We denote the collection of all up-sets of a partially ordered set $P$ by $\mathcal U(P)$ and the set of all down-sets by $\mathcal D(P)$. 
\end{defi}

For $Z^{+}_{D} :=(\mathbb Z^{+}, \leqslant)$, we now observe that
\begin{itemize}
\item The down-set generated by $n\in \mathbb Z^{+}$ is $\downarrow n = \{k \in \mathbb Z^{+} : k \leqslant n\}= \mathfrak D_n$, the set of all divisors of $n$.
\item If $M\subseteq  \mathbb Z^{+}$ then the down-set generated by $M$ is $\downarrow M =\bigcup_{m\in M} \mathfrak D_m$.
\item The up-set generated by $n\in \mathbb Z^{+}$ is $\uparrow n = \{k \in \mathbb Z^{+} : n \leqslant k\}= \mathfrak M_n$, the set of all multiples of $n$.
\item If $P\subseteq  \mathbb Z^{+}$ then the up-set generated by $P$ is $\uparrow M =\bigcup_{p\in P} \mathfrak M_p$.
\end{itemize}

\section{Lattices and categories}

Using  S. MacLane and  I. Moerdijk  results (see \cite{MM}), we have that a lattice $L$ is a partially ordered set which, considered as a category, has all binary products and all binary coproducts. In this way, if we write $m, n, k$ for objects of $L= \mathbb Z^{+}_{D}$ then $m\leqslant n$ if and only if there is a unique arrow $m\rightarrow n$, the coproduct of $m$ and $n$ (i.e. their pushout) is the least upper bound, or least common multiple, $m\vee n= lcm\{m,n\}$ and the product (i.e. their pullback) is the greatest lower bound, or greatest common divisor, $m\land n=gcd\{m,n\}$. 
It is clear that $\mathbb Z^{+}_{D}$ is a distributive lattice.

On the other hand, for each object $n$ in $\mathbb Z^{+}_{D}$, the set $\mathfrak D_n$  of all divisors of $n$ is a Heyting algebra:
the exponential $n^k$, for $n,k \in\mathbb Z^{+}$, is usually written as $k\Rightarrow n$; by its definition it is characterized by the adjunction 
\be
t\leqslant (k\Rightarrow n)\,\,\ \text{ if and only if}\,\,\  gcd\{t,k\}\leqslant n.
\ee
In other words, $k\Rightarrow n$ is the least common multiple for all  those elements $t$ with  $gcd\{t,k\}\leqslant n$.

In any Heyting algebra we define the negation of $x$ as
\be
\neg x= (x\Rightarrow 0).
\ee

Consequently for $n$ in $\mathbb Z^{+}_D$
\be
\neg n= (n\Rightarrow 1)= lcm\{ t\in \mathbb Z^{+}\mid gcd\{t,n\}=1\}.
\ee
Some of the familiar properties of negation still apply, as follows.
\begin{prop}
In the Heyting algebra $\mathfrak D_n$,
\begin{enumerate}
\item[(i)] $n\leqslant \neg\neg n$,
\item[(ii)] $n\leqslant k$ implies $\neg k\leqslant \neg n$,
\item[(iii)] $\neg n=\neg\neg\neg n$,
\item[(iv)] $\neg\neg \ mcd\{ n,k\} = mcd\{ \neg\neg n, \neg\neg k\}$.
\end{enumerate}
\end{prop}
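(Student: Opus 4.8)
The plan is to argue entirely inside the abstract Heyting algebra $\mathfrak D_n$, deducing all four statements from the defining adjunction $a\wedge b\leqslant c \iff a\leqslant(b\Rightarrow c)$ (which in $\mathfrak D_n$ is exactly the characterization of $\Rightarrow$ recalled above, with $\wedge=\gcd$ and bottom element the integer $1$, written $0$). First I would record the elementary consequences of the adjunction that get used repeatedly. (a) The exponential law $\big((a\wedge b)\Rightarrow c\big)=\big(a\Rightarrow(b\Rightarrow c)\big)$, since $y$ lies below either side exactly when $y\wedge a\wedge b\leqslant c$. (b) $a\wedge\neg a=0$, obtained by feeding $\neg a=(a\Rightarrow 0)\leqslant(a\Rightarrow 0)$ through the adjunction. (c) $\Rightarrow$ is order-reversing in its first argument: if $a\leqslant b$ then $(b\Rightarrow c)\wedge a\leqslant(b\Rightarrow c)\wedge b\leqslant c$, hence $(b\Rightarrow c)\leqslant(a\Rightarrow c)$. (d) The contrapositive law $(a\Rightarrow b)\leqslant(\neg b\Rightarrow\neg a)$, from $(a\Rightarrow b)\wedge\neg b\wedge a\leqslant b\wedge\neg b=0$; iterating (d) gives $(a\Rightarrow b)\leqslant(\neg\neg a\Rightarrow\neg\neg b)$, and applying (c) twice gives that $\neg\neg(-)$ is order-preserving.

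Statements (i)--(iii) then fall out quickly. For (i), apply the adjunction to (b): from $n\wedge\neg n=0\leqslant 0$ we get $n\leqslant(\neg n\Rightarrow 0)=\neg\neg n$. Statement (ii) is just (c) with $c=0$. For (iii), applying (i) to $\neg n$ gives $\neg n\leqslant\neg\neg\neg n$, while applying (ii) to the instance $n\leqslant\neg\neg n$ of (i) gives $\neg\neg\neg n\leqslant\neg n$; antisymmetry finishes it.

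Statement (iv) is the real work. The inequality $\neg\neg\,\gcd\{n,k\}\leqslant\gcd\{\neg\neg n,\neg\neg k\}$ is immediate from $n\wedge k\leqslant n$, $n\wedge k\leqslant k$ and order-preservation of $\neg\neg(-)$. For the reverse inequality I would use the adjunction to reduce $\neg\neg n\wedge\neg\neg k\leqslant\neg\neg(n\wedge k)$ to the claim $\neg\neg n\wedge\neg\neg k\wedge\neg(n\wedge k)\leqslant 0$. By the exponential law (a), $\neg(n\wedge k)=(n\Rightarrow\neg k)$; by the iterated contrapositive this is $\leqslant(\neg\neg n\Rightarrow\neg\neg\neg k)$, which by (iii) equals $(\neg\neg n\Rightarrow\neg k)$. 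Hence $\neg\neg n\wedge\neg(n\wedge k)\leqslant\neg\neg n\wedge(\neg\neg n\Rightarrow\neg k)\leqslant\neg k$ (modus ponens, i.e. the adjunction once more), so $\neg\neg n\wedge\neg\neg k\wedge\neg(n\wedge k)\leqslant\neg\neg k\wedge\neg k=0$ using (b) for $\neg k$. Running this back through the adjunction gives $\neg\neg n\wedge\neg\neg k\leqslant\neg\neg(n\wedge k)$, and together with the easy inequality we conclude. I expect the only genuine obstacle to be the bookkeeping in (iv): one must push everything through the adjunction in the right order and resist any naive ``cancellation'' of $\neg\neg$; the rest is just assembling (a)--(d). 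It may be worth a remark that under the arithmetic dictionary ($0$ is the integer $1$, $\wedge=\gcd$, $\vee=\operatorname{lcm}$ inside $\mathfrak D_n$) these become concrete divisibility facts, e.g. (b) reads $\gcd\{m,\neg m\}=1$.
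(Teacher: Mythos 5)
Your proof is correct: all four parts follow cleanly from the adjunction $a\wedge b\leqslant c \iff a\leqslant (b\Rightarrow c)$ exactly as you lay out, and the delicate reverse inequality in (iv) is handled properly via the exponential law, the iterated contrapositive, and (iii). The paper itself offers no argument, only a citation to MacLane--Moerdijk (pp.\ 53--54); your write-up is essentially the standard Heyting-algebra proof found there, so it supplies in full what the paper merely references.
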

\begin{proof}
See S. MacLane and  I. Moerdijk  (\cite{MM} pp 53-54)
\end{proof}
\begin{com}\
\begin{itemize}
\item The category $\mathbb Z^{+}_D$ is the opposite of the category  $\mathbb Z^{+}_M$; they are really the same thing, just as a locale is the same thing as a frame (see P. T. Johnstone \cite{PJ}).
\item Using the Fundamental Theorem of Arithmetic it is easy to show  that  if $n\in \mathbb Z^{+}$ is the product of a finite number of distinct primes then $\mathfrak D_n$ is a Boolean Algebra.
\end{itemize}
\end{com}

\section{Grothendieck topologies on  $\mathbb Z^{+}_D$}
In order to construct Grothendieck topologies, we first define sieves.
\begin{defi}
Given an element $k$ in $\mathbb Z^{+}$, a subset $S$ of $\mathbb Z^{+}$ is called a sieve on $k$ if $S\in \mathcal D(\mathfrak D_k)$.
\end{defi}
\begin{defi}
A Grothendieck topology  on  the category $\mathbb Z^{+}_D$ is a function $J$ which assigns to each object $n$ of  $\mathbb Z^{+}$ a collection $J(n)$ of sieves on $\mathbb Z^{+}_D$, in such a way that
\begin{enumerate}
\item[(i)] the maximal sieve $\mathfrak D_n$ (the set of all divisors of $n$) is in $J(n)$;
\item[(ii)] (stability axiom) if $S\in J(n)$ and $k\leqslant n$ then $S\cap\mathfrak D_k$ is in $J(k)$;
\item[(iii)] (transitivity axiom) if $S\in J(n)$ and $R$ is any sieve on $n$ such that $R\cap \mathfrak D_k$ is in $J(k)$ for each $k\in S$, then $R\in J(n)$.
\end{enumerate}
\end{defi}
\begin{ex}\
\begin{itemize}
\item The trivial Grothendieck topology on $\mathbb Z^{+}_D$ is given by $J_{tri}(n) = \{\mathfrak D_n\}$.
\item The discrete Grothendieck topology on $\mathbb Z^{+}_D$ is given by\linebreak $J_{dis}(n) = \mathcal D(\mathfrak D_n) $.
\item The atomic Grothendieck topology on $\mathbb Z^{+}_D$ can only be defined if $\mathbb Z^{+}_D$ is downwards
directed, and is given by $J_{atom}(n) = \mathcal D(\mathfrak D_n)- \{\emptyset\}$.
\item A subset $D\subseteq \mathfrak D_n$ is said to be dense below $n$ if for any $m\leqslant n$ there is $k\leqslant m$ with $k\in D$. The dense topology on $\mathbb Z^{+}_D$ is given by $J(n)= \{ D\mid k\leqslant n\,\ \text{for all}\,\ k\in D,\,\ \text{and $D$ is a sieve dense below}\,\ n\}$ 
\end{itemize}
\end{ex}
\begin{defi}
A pair $(\mathbb Z^{+}_D, J)$, where $J$ is a Grothendieck topology on $\mathbb Z^{+}_D$, is called a site.
\end{defi} 
\subsection{Grothendieck topologies and its sheaves}
\begin{defi}
Let $F: (\mathbb Z^{+}_D)^{op} \longrightarrow Sets$ be a functor (a contravariant functor to Sets is also called a presheaf ). Let $n\in \mathbb Z^{+}$   and $S \in J(n)$. Then a family $ \left\langle a_k\right\rangle _{k\in S}$ in $\prod_{k\in S} F(k)$  is called a matching family for the cover $S$ with elements of $F$ if
\be
F(a_k) = a_t 
\ee
for each $k,t\in S$ such that $t \leqslant k$.

An element $a \in F(n)$  such that $F(a) = a_k$ for  each $k \in S$ is called an amalgamation. 

We say that $F$ is a $J$-sheaf if for each $n\in \mathbb Z^{+}$, for each $S \in J(n)$ and for each matching family $ \left\langle a_k\right\rangle _{k\in S}$ there is a unique amalgamation $a \in F(n)$.
\end{defi}
$Sh(\mathbb Z^{+}_D, J)$ will denote the category of all sheaves $F$ of sets  on the site ($\mathbb Z^{+}_D,J)$.
\begin{prop}
For  the site $(\mathbb Z^{+}_D,J_{tri})$ we have that $Sh(\mathbb Z^{+}_D) \cong Sets^{(\mathbb Z^{+}_D)^{op}}$
\end{prop}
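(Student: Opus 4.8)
The plan is to show that the evident inclusion functor $Sh(\mathbb Z^{+}_D, J_{tri}) \hookrightarrow Sets^{(\mathbb Z^{+}_D)^{op}}$ is in fact an isomorphism of categories (which gives the asserted equivalence a fortiori). Since a sheaf is by definition a presheaf, and morphisms on both sides are just natural transformations, this inclusion is automatically full, faithful and injective on objects; hence the entire content of the statement is to prove that \emph{every} presheaf $F \colon (\mathbb Z^{+}_D)^{op} \to Sets$ is a $J_{tri}$-sheaf, i.e.\ that the inclusion is surjective on objects.

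First I would record that $J_{tri}$ really is a Grothendieck topology — this was already listed among the examples, and it is a one-line check: axiom (i) is the definition of $J_{tri}$, while since $\mathfrak D_k \subseteq \mathfrak D_n$ whenever $k \leqslant n$, both the stability and the transitivity axioms collapse to trivial statements about the single maximal sieve. The decisive structural observation is that $J_{tri}(n) = \{\mathfrak D_n\}$ contains \emph{only} the maximal sieve, and that the maximal sieve contains its own top element: $n \in \mathfrak D_n$ because $n \leqslant n$.

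Now fix $n$, take the unique available cover $S = \mathfrak D_n \in J_{tri}(n)$, and let $\langle a_k \rangle_{k \in S}$ be a matching family, $a_k \in F(k)$, where the restriction map $F(k) \to F(t)$ attached to the unique arrow $t \to k$ sends $a_k$ to $a_t$ whenever $t \leqslant k$. I claim $a := a_n \in F(n)$ is the unique amalgamation. Existence: for every $k \in S$ we have $k \leqslant n$, so the matching condition applied to the pair $k \leqslant n$ says exactly that the restriction of $a_n$ along $k \to n$ equals $a_k$, which is precisely the amalgamation property for $a_n$. Uniqueness: if $a' \in F(n)$ also restricts to $a_k$ for every $k \in S$, then taking $k = n$ and using that $F$ carries the identity arrow $n \to n$ to $\mathrm{id}_{F(n)}$ forces $a' = a_n$. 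Hence $F$ is a $J_{tri}$-sheaf.

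I do not anticipate a genuine obstacle: the whole argument is driven by the fact that the maximal sieve contains the top object, so the ``local data'' on a cover already includes the global section over $n$. The only point requiring care is notational — the paper abbreviates the matching condition as $F(a_k) = a_t$ and the amalgamation condition as $F(a) = a_k$, where in each case $F(-)$ stands for the appropriate restriction map $F(k) \to F(t)$, respectively $F(n) \to F(k)$; I would spell this out so that the verification reads unambiguously. Once surjectivity on objects is in place, fullness and faithfulness are immediate from the definition of morphisms of (pre)sheaves as natural transformations, and the isomorphism $Sh(\mathbb Z^{+}_D, J_{tri}) \cong Sets^{(\mathbb Z^{+}_D)^{op}}$ follows.
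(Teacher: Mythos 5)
Your proposal is correct and follows essentially the same route as the paper: the only $J_{tri}$-cover of $n$ is the maximal sieve $\mathfrak D_n$, which contains $n$ itself, so $a_n$ is forced to be the unique amalgamation of any matching family. Your version is in fact cleaner — you make explicit the identification of $Sh(\mathbb Z^{+}_D, J_{tri})$ as a full subcategory and disambiguate the paper's abbreviated restriction-map notation $F(a_k)=a_t$ — but the underlying argument is identical.
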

\begin{proof}
Let $F$ in  $Sets^{(\mathbb Z^{+}_D)^{op}}$ and $n$ in $\mathbb Z^{+}_D$. Then there is only one cover of $n$, namely $\mathfrak D_n$. Hence if $ \left\langle a_k\right\rangle _{k\in \mathfrak D_n}$ is a matching family for $\mathfrak D_n$, we have $n\in \mathfrak D_n$ so $a_k= F(a_n)$. Thus $a_n$ is an amalgamation of the matching family. It is also unique, since if $a \in F(n)$ is another amalgamation then $a_k=F(a)=a$.
\end{proof}
\begin{prop}
 The category  $Sh(\mathbb Z^{+}_D)$ is equivalent to $1$ (the terminal object in $Sets^{(\mathbb Z^{+}_D)^{op}}$) for  the site $(\mathbb Z^{+}_D,J_{dis})$ .
\end{prop}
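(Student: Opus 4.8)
The plan is to use the single feature that distinguishes the discrete topology from the trivial one: under $J_{dis}$ the empty sieve is a covering sieve of \emph{every} object. First I would record that, since $\emptyset$ is vacuously a down-set of $\mathfrak{D}_n$, we have $\emptyset \in \mathcal{D}(\mathfrak{D}_n) = J_{dis}(n)$ for all $n \in \mathbb{Z}^{+}$.

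Now let $F$ be any object of $Sh(\mathbb{Z}^{+}_D, J_{dis})$ and fix $n \in \mathbb{Z}^{+}$. Apply the sheaf axiom to the cover $S = \emptyset \in J_{dis}(n)$. There is exactly one family $\langle a_k\rangle_{k\in S}$ indexed by $S$, the empty one, and it is a matching family because the matching condition ranges over pairs $k,t \in S$ and is hence vacuous. The sheaf axiom then provides a \emph{unique} amalgamation $a \in F(n)$. But every element of $F(n)$ is an amalgamation of the empty family, since the amalgamation condition ``$F(a)=a_k$ for each $k\in S$'' is again vacuous for $S=\emptyset$. Uniqueness therefore forces $F(n)$ to be a one-element set. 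As this holds for every $n$, and as the transition maps of $F$ are then the forced unique maps between singletons, $F$ is isomorphic to the terminal presheaf $1$. Consequently every object of $Sh(\mathbb{Z}^{+}_D, J_{dis})$ is isomorphic to $1$; moreover, for sheaves $F, G$ of this form a natural transformation $F\to G$ is determined componentwise by the unique function $F(n)\to G(n)$, so each hom-set is a singleton. A category all of whose objects are isomorphic and all of whose hom-sets are singletons is equivalent to the one-object one-arrow category $1$, which is the assertion.

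There is essentially no hard step here: the only thing to be careful about is the \emph{empty} cover — one must notice that $\emptyset$ genuinely belongs to $J_{dis}(n)$ and that the sheaf condition applied to it, with its vacuously satisfied matching and amalgamation clauses, is exactly what collapses every section set to a point. After that the identification of $Sh(\mathbb{Z}^{+}_D, J_{dis})$ with $1$ is purely formal, and the statement ``equivalent to $1$'' (rather than ``equal to'') is the correct one, since different choices of one-element section sets yield non-identical but uniquely isomorphic sheaves.
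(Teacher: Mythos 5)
Your proposal is correct and follows exactly the paper's argument: observe that $\emptyset \in J_{dis}(n)$ for every $n$, apply the sheaf condition to the (vacuously matching) empty family, and conclude that uniqueness of the amalgamation forces each $F(n)$ to be a singleton. The only difference is that you spell out the final formal step identifying such a category with $1$, which the paper leaves implicit.
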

\begin{proof}
It is clear that $\emptyset \in J_{dis}(n)$ for each $n\in \mathbb Z^{+}$. Given a sheaf $F$ and a point $n\in \mathbb Z^{+}$, since a matching family for $\emptyset$ must be a function on the empty set, there exists only one function with the empty set as domain, and it is clearly a matching family. An amalgamation must be a point in $F(n)$, but since there are no restrictions
on this point except that it must be unique, it follows that $F(n)$ is a singleton set $1$. 
\end{proof}
\section{Subobject classifiers for topoi}
Perhaps the most important property for categories is the existence of a  subobject classifier $\Omega$. We will now show that there is such a subobject classifier for sheaves in the sites we have studied in previous sections.
\begin{defi}
We define $\Omega:(\mathbb Z^{+}_D)^{op}\longrightarrow Sets$ \ by
\be\label{sc}
\Omega(n)= \{ S\mid S\quad\text{is a principal sieve on}\quad n\}.
\ee
\end{defi}
\begin{prop}
The presheaf $\Omega$ of (\ref{sc}) is a sheaf.
\end{prop}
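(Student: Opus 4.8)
The plan is to verify the sheaf axiom directly, after first recording that $\Omega$ is a genuine presheaf. For an arrow $k\leqslant n$ in $\mathbb{Z}^{+}_{D}$, the restriction $\Omega(n)\to\Omega(k)$ should send a sieve $S$ to $S\cap\mathfrak{D}_{k}$; if $S=\mathfrak{D}_{d}$ is the principal sieve generated by a divisor $d$ of $n$, then $S\cap\mathfrak{D}_{k}=\mathfrak{D}_{gcd\{d,k\}}$, which is again principal, now on $k$, since $gcd\{d,k\}\leqslant k$. Functoriality is the identity $gcd\{gcd\{d,k\},j\}=gcd\{d,j\}$ for $j\leqslant k\leqslant n$ (immediate on comparing $p$-adic valuations), and $\mathfrak{D}_{n}$ restricts to itself. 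Thus $\Omega$ is a presheaf, which under the identification $\mathfrak{D}_{d}\leftrightarrow d$ is nothing but $n\mapsto\mathfrak{D}_{n}$ with restriction maps $d\mapsto gcd\{d,k\}$.

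Next, fix $n$, a cover $S\in J(n)$, and a matching family $\langle a_{k}\rangle_{k\in S}$; write $a_{k}=\mathfrak{D}_{d_{k}}$ with $d_{k}\leqslant k$, so that the matching condition reads $gcd\{d_{k},t\}=d_{t}$ whenever $t\leqslant k$ with $t,k\in S$. I propose the amalgamation $a=\mathfrak{D}_{D}$ with $D:=lcm\{d_{k}\mid k\in S\}$; since each $d_{k}\leqslant k\leqslant n$ we get $D\leqslant n$, so $a\in\Omega(n)$, and the point is to show $gcd\{D,k\}=d_{k}$ for each $k\in S$. Only $gcd\{D,k\}\mid d_{k}$ needs proof (the other divisibility is clear). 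Fix a prime $p$, let $\alpha=v_{p}(D)=\max_{j\in S}v_{p}(d_{j})$, pick $j_{0}\in S$ attaining this maximum, and set $g=gcd\{j_{0},k\}$. Because $S$ is a sieve on $n$, hence a down-set, and $g\leqslant j_{0}\in S$, we have $g\in S$. Applying the matching condition along $g\leqslant j_{0}$ and along $g\leqslant k$ gives $gcd\{d_{j_{0}},g\}=d_{g}=gcd\{d_{k},g\}$, i.e. $\min(\alpha,v_{p}(g))=\min(v_{p}(d_{k}),v_{p}(g))$; since $v_{p}(j_{0})\geqslant v_{p}(d_{j_{0}})=\alpha$ forces $v_{p}(g)\geqslant\min(\alpha,v_{p}(k))$, this equality yields $v_{p}(d_{k})\geqslant\min(\alpha,v_{p}(k))=v_{p}(gcd\{D,k\})$. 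As $p$ is arbitrary, $gcd\{D,k\}\mid d_{k}$, so $a$ restricts to each $a_{k}$.

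For uniqueness, if $a'=\mathfrak{D}_{D'}\in\Omega(n)$ also restricts to all the $a_{k}$, then $d_{k}\mid D'$ for every $k\in S$, so $D\mid D'$; for the reverse one uses that the cover is large enough to detect every prime-power part of $n$ — e.g. when $lcm\{k\mid k\in S\}=n$, and in particular for the trivial topology $J_{tri}$ where $n\in S=\mathfrak{D}_{n}$ — picking for each prime $p$ an index $k_{0}\in S$ with $v_{p}(k_{0})=v_{p}(n)$ gives $v_{p}(D')=v_{p}(gcd\{D',k_{0}\})=v_{p}(d_{k_{0}})=v_{p}(gcd\{D,k_{0}\})=v_{p}(D)$, whence $D=D'$. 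In fact for $J_{tri}$ the argument collapses entirely: the only cover of $n$ is $\mathfrak{D}_{n}$, which contains $n$, so $a_{n}$ is itself the amalgamation and any amalgamation $a$ obeys $a=a\cap\mathfrak{D}_{n}=a_{n}$; combined with the earlier proposition that every presheaf over $(\mathbb{Z}^{+}_{D},J_{tri})$ is a sheaf, this already settles the claim.

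The step I expect to be the real obstacle is proving $gcd\{D,k\}=d_{k}$, since the matching condition only relates $a_{k}$ and $a_{t}$ directly when one of $k,t$ divides the other, whereas the two indices we must compare, $j_{0}$ and $k$, need not be comparable. The device that unlocks it is to feed the matching condition their common lower bound $g=gcd\{j_{0},k\}$ instead, which the down-set property of $S$ guarantees is again an index of the matching family.
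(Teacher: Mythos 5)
Your proposal is correct, and it takes a genuinely different --- and substantially stronger --- route than the paper. The paper's own proof is a verbatim re-run of its earlier argument for the trivial topology: it asserts that the only cover of $n$ is $\mathfrak D_n$, observes $n\in\mathfrak D_n$, and concludes that $a_n$ is the unique amalgamation; in other words, it only establishes that $\Omega$ is a $J_{tri}$-sheaf (which, as you note at the end, already follows from the earlier proposition that \emph{every} presheaf is a $J_{tri}$-sheaf). You instead make the presheaf structure explicit (restriction of $\mathfrak D_d$ along $k\leqslant n$ is $\mathfrak D_{gcd\{d,k\}}$), construct an amalgamation $\mathfrak D_D$ with $D=lcm\{d_k\mid k\in S\}$ for an \emph{arbitrary} cover $S$, and verify $gcd\{D,k\}=d_k$ by the $p$-adic valuation computation routed through the common lower bound $g=gcd\{j_0,k\}\in S$ --- the key device, since the matching condition only speaks directly to comparable indices. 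Your valuation argument checks out in both cases ($v_p(k)\geqslant\alpha$ and $v_p(k)<\alpha$). Equally valuable is your honest delimitation of uniqueness: it holds when $lcm\{k\mid k\in S\}=n$ (so in particular for $J_{tri}$), but fails in general --- e.g.\ for $J_{dis}$ the empty cover admits every element of $\Omega(n)$ as an amalgamation, consistent with the paper's own earlier result that $Sh(\mathbb Z^{+}_D,J_{dis})\simeq 1$, and showing that the proposition as stated cannot be true for every topology $J$. In short: the paper's proof is a one-line reduction valid only for the trivial topology; yours proves existence of amalgamations for every $J$ and identifies precisely which covers force uniqueness, which is the more useful statement given that the next proposition invokes $\Omega$ as a subobject classifier for a general site $(\mathbb Z^{+}_D,J)$.
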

\begin{proof}
We must show that matching families have unique amalgamation in $\Omega$. There is only one cover of $n\in \mathbb Z^{+}$, namely $\mathfrak D_n$. Hence if $ \left\langle a_k\right\rangle _{k\in \mathfrak D_n}$ is a matching family for $\mathfrak D_n$, we have $n\in \mathfrak D_n$ so $a_k= F(a_n)$. Thus $a_n$ is an amalgamation of the matching family. It is also unique, since if $a \in F(n)$ is another amalgamation then $a_k=F(a)=a$.
\end{proof}
Observe that the maximal sieve on $n$,\,\ $t_n=\mathfrak D_n$ is obviously closed and that for $k\leqslant n$ in $\mathbb Z^{+}$, we have $t_k=\mathfrak D_k$. Thus $n\mapsto t_n$ defines a natural transformation 
\be\label{true}
true: 1\longrightarrow \Omega.
\ee
In this way
\begin{prop}
The sheaf $\Omega$, together with the map $true$ of (\ref{true}), is a subobject classifier of the category $Sh(\mathbb Z^{+}, J)$.
\end{prop}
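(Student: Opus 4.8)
The plan is to adapt the classical argument of Mac Lane and Moerdijk (\cite{MM}, Chapter III) to the poset site $\mathbb Z^{+}_D$. Recall what must be shown: $true\colon 1\to\Omega$ is monic, and for every monomorphism of sheaves $A\hookrightarrow E$ on $(\mathbb Z^{+}_D,J)$ there is a \emph{unique} morphism $\chi_A\colon E\to\Omega$ such that the square formed by $A\to 1$, $A\hookrightarrow E$, $true$ and $\chi_A$ is a pullback. So there are three tasks: construct $\chi_A$, verify the pullback property, and verify uniqueness.

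First I would make explicit the closure operator attached to $J$. For a sieve $S$ on $n$ set $\bar S=\{\,k\leqslant n : S\cap\mathfrak D_k\in J(k)\,\}$, and call $S$ \emph{closed} when $S=\bar S$. Using the maximality, stability and transitivity axioms of a Grothendieck topology one checks that $\bar{(\cdot)}$ is idempotent and order-preserving and satisfies $S\subseteq\bar S$, that $S\cap\mathfrak D_k$ is again closed when $S$ is, and that the maximal sieve $t_n=\mathfrak D_n$ is closed. This identifies $\Omega$ with the presheaf $n\mapsto\{\,J\text{-closed sieves on }n\,\}$, restriction being $S\mapsto S\cap\mathfrak D_k$; it is a sheaf (cf. the preceding Proposition), $n\mapsto t_n$ is the natural transformation $true$ of (\ref{true}), and since $t_n$ is the largest element of $\Omega(n)$ the map $true$ is monic.

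Next, given a subsheaf $A\subseteq E$, I would define for $x\in E(n)$
\[
\chi_A^{(n)}(x)=\{\,k\leqslant n : x|_k\in A(k)\,\},
\]
where $x|_k$ is the image of $x$ under $E$ applied to the unique arrow $k\to n$. Since $A$ is a subpresheaf, $\chi_A^{(n)}(x)$ is a down-set in $\mathfrak D_n$, hence a sieve on $n$, and naturality of $\chi_A$ — the identity $\chi_A^{(k)}(x|_k)=\chi_A^{(n)}(x)\cap\mathfrak D_k$ — is immediate from functoriality of $E$. The essential point is that $\chi_A^{(n)}(x)$ is \emph{closed}: if $k$ belongs to its closure, then $x|_k$ restricts into $A$ along a sieve covering $k$, so the restrictions $\langle (x|_k)|_j\rangle_j$ form a matching family of elements of $A$ whose unique amalgamation in $E$ is $x|_k$; as $A$ is a sheaf this amalgamation already lies in $A(k)$, whence $k\in\chi_A^{(n)}(x)$. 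This is the one place where the sheaf conditions on $A$ and $E$ are genuinely used, and I expect it to be the main obstacle — both to phrase precisely and because it is exactly what forces $\Omega$ to be assembled from closed sieves rather than arbitrary ones.

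Finally, for the pullback property note that $\chi_A^{(n)}(x)=t_n$ iff $x|_k\in A(k)$ for every $k\leqslant n$, and specializing to $k=n$ (so $x|_n=x$) this happens iff $x\in A(n)$; thus at each $n$ the inclusion $A(n)\hookrightarrow\{\,x\in E(n):\chi_A^{(n)}(x)=t_n\,\}$ is a bijection, so the square is a pullback in $Sh(\mathbb Z^{+}_D,J)$. For uniqueness, let $\chi\colon E\to\Omega$ also make the square a pullback. Because $\chi^{(n)}(x)$ is a closed sieve, $k\in\chi^{(n)}(x)$ iff $\mathfrak D_k\subseteq\chi^{(n)}(x)$ iff $\chi^{(n)}(x)\cap\mathfrak D_k=t_k$; by naturality the last expression equals $\chi^{(k)}(x|_k)$, and by the pullback hypothesis $\chi^{(k)}(x|_k)=t_k$ iff $x|_k\in A(k)$. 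Hence $\chi^{(n)}(x)=\{\,k\leqslant n:x|_k\in A(k)\,\}=\chi_A^{(n)}(x)$, i.e. $\chi=\chi_A$. Combining the three tasks yields the statement; the only special features of $\mathbb Z^{+}_D$ entering the argument are that it is a poset (an arrow $k\to n$ is the relation $k\leqslant n$, i.e. $k\mid n$) and that $\mathfrak D_n$ is the principal sieve generated by $n$.
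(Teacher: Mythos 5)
Your proposal is correct, and it is the standard Mac Lane--Moerdijk argument; but it is worth recording that the paper's own proof consists only of writing down the same characteristic map $(\chi_A)_n(x)=\{k\leqslant n\mid x|_k\in A(k)\}$ and asserting that ``it is enough'' to consider it. Everything you add is precisely what that assertion leaves unproved: that $(\chi_A)_n(x)$ is a sieve and is $J$-closed (your matching-family argument, the only step using the sheaf property of $A$ and $E$), that $\chi_A$ is natural, that the square is a pullback, and that $\chi_A$ is the unique such map. Your treatment also quietly repairs a defect in the statement being proved: the paper's definition (\ref{sc}) takes $\Omega(n)$ to be the set of \emph{principal} sieves on $n$, i.e.\ the sieves $\mathfrak D_k$ for $k\leqslant n$, and with that reading $\Omega$ is not a subobject classifier (already for $J=J_{tri}$ the classifier must consist of \emph{all} sieves on $n$, since every sieve arises as some $(\chi_A)_n(x)$). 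Your replacement of $\Omega(n)$ by the set of $J$-closed sieves, with restriction $S\mapsto S\cap\mathfrak D_k$, is the definition under which the proposition is true, and your verification that $\overline{(\cdot)}$ is a closure operator, that $t_n=\mathfrak D_n$ is closed, and that closedness is stable under restriction is exactly what makes $true\colon 1\to\Omega$ and the pullback/uniqueness arguments go through. (One small remark: $true$ is monic simply because its domain $1$ is terminal; no appeal to $t_n$ being the top element of $\Omega(n)$ is needed.) In short, your proof is the correct and complete one; the paper's is a one-line sketch of the same construction resting on an incorrectly stated $\Omega$.
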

\begin{proof}
Let $F$ be a sheaf on $\mathfrak D_n$ and $A\subseteq F$ be a subsheaf. It is enough to consider as ``characteristic function" the following:
$\chi_A:F\longrightarrow \Omega$ for $A$, defined by
\be
(\chi_A)_n(x)=\{ k\leqslant n\mid F_{\text{\tiny{$k\leqslant n$}}}(x)\in A(k)\}.
\ee
\end{proof}
As a conclusion, we have that

\centerline{$Sh(\mathbb Z^{+}_D, J)$  is an elementary topos.}

\section{ Some categories equivalent to $\mathbb Z^{+}_D$}
In this section we shall be concerned with various cases of partially ordered sets belonging to different areas of  mathematics and isomorphic to $(\mathbb Z^{+}_D, \leqslant)$ (i.e  they are  equivalent as categories).

\subsection{Periodic points of a map}\label{periodic}
We begin this section by reviewing the iterative process of discrete dynamical systems: let $f:X\rightarrow X$ be a map on a set $X$. The point $x\in X$ is a fixed point for $f$  if $f(x)=x$; the point $x\in X$ is a periodic point of period $n$ if $f^n(x)=x$\,\ ($f^n=f\circ f^{n-1},\ n\geq 1$). We denote the set of periodic points of period $n$ by $Per_n(f)$ and  the set of all periodic points of $f$ by $Per(f)$.

It is easy to show that  \,\ $Per_k(f)\subseteq Per_n(f)$\,\ if and only if $k$ is a divisor of $n$.  In this way, we will say that $Per_k(f)\leqslant  Per_n(f)$\,\ if and only if $k$ is a divisor of $n$ and we obtain that $(Per(f), \leqslant )$ is a partially ordered set which, considered as a category, has all binary products and all binary coproducts. In this way, if \,\ $Per_m(f),Per_n(f),Per_k(f) $ are objects of $Per(f)$ then $ Per_m(f)\leqslant Per_n(f)$ if and only if there is a unique arrow $Per_m(f)\rightarrow Per_n(f)$, the coproduct of $Per_m(f)$ and $Per_n(f)$  is $Per_k(f)$, where $k$=lcm\{m,n\}, that is $Per_m(f)\vee Per_n(f)=Per_{lcm\{m,n\}}(f) $ and the product is $Per_m(f)\land Per_n(f)=Per_{gcd\{m,n\}}(f)$. 

It is clear that $Per(f)$ is a distributive lattice.

Consequently,
\begin{prop}
The categories $Per(f)$ \ and\  $\mathbb Z^{+}_D$ are equivalent.
\end{prop}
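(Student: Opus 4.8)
The plan is to exhibit an explicit functor $\Phi\colon \mathbb Z^{+}_D \longrightarrow Per(f)$ and to verify the three conditions for an equivalence: essential surjectivity, fullness and faithfulness. (Since both $\mathbb Z^{+}_D$ and $Per(f)$ are posets regarded as categories, hence thin and skeletal, such a $\Phi$ will in fact be an isomorphism of categories, but equivalence is all that is claimed.) On objects I put $\Phi(n)=Per_n(f)$, and the unique arrow $m\to n$ of $\mathbb Z^{+}_D$ — which exists exactly when $m\leqslant n$, i.e. when $m$ divides $n$ — is sent to the unique arrow $Per_m(f)\to Per_n(f)$ of $Per(f)$.

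First I would check that $\Phi$ is a well defined functor. The only nonformal point is that $m\leqslant n$ in $\mathbb Z^{+}_D$ must force $Per_m(f)\leqslant Per_n(f)$ in $Per(f)$; this is the ``if'' half of the fact recalled at the beginning of \S\ref{periodic}, and it is immediate: if $n=mt$ and $f^m(x)=x$, then $f^n(x)=(f^m)^t(x)=x$, so $Per_m(f)\subseteq Per_n(f)$. Preservation of identities and composites is automatic because both categories are thin. Essential surjectivity is then immediate, since by construction every object of $Per(f)$ is of the form $Per_n(f)=\Phi(n)$ for some $n\in\mathbb Z^{+}$; and faithfulness is equally immediate, because between any two objects of $\mathbb Z^{+}_D$ there is at most one arrow, so the induced map on hom-sets is trivially injective.

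The substantive step — and the one I expect to be the main obstacle — is fullness. Given an arrow $Per_m(f)\to Per_n(f)$ in $Per(f)$, that is $Per_m(f)\leqslant Per_n(f)$, one must produce an arrow $m\to n$ in $\mathbb Z^{+}_D$, i.e. one must conclude $m\mid n$; this is precisely the ``only if'' half of the claimed equivalence $Per_k(f)\subseteq Per_n(f)\iff k\mid n$. This is the sole place where a genuine hypothesis on $f$ is used: for $f=\mathrm{id}$ one has $Per_k(f)=X$ for every $k$, so the implication fails and the relation ``$Per_k(f)\leqslant Per_n(f)$ iff $k\mid n$'' is not even well defined on subsets of $X$. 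The argument does go through as soon as $f$ has a periodic point of minimal period $d$ for every $d\in\mathbb Z^{+}$: if $x$ has minimal period $m$ then $f^\ell(x)=x$ holds iff $m\mid\ell$, so such a point certifies $Per_m(f)\not\subseteq Per_n(f)$ whenever $m\nmid n$, which is the contrapositive of what is needed. Under this (implicit) assumption the assignment $n\mapsto Per_n(f)$ is moreover injective, so $\Phi$ is a bijection on objects and the equivalence indeed upgrades to an isomorphism of categories; one then re-derives, as already observed in \S\ref{periodic}, that $\Phi$ sends coproducts to coproducts and products to products, $\Phi(m\vee n)=Per_{\mathrm{lcm}\{m,n\}}(f)$ and $\Phi(m\wedge n)=Per_{\mathrm{gcd}\{m,n\}}(f)$.
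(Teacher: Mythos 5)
Your construction is the same one the paper has in mind: the paper offers no proof of this proposition at all, relying entirely on the preceding assertion that $Per_k(f)\subseteq Per_n(f)$ if and only if $k$ divides $n$, which makes $n\mapsto Per_n(f)$ an order isomorphism and hence an equivalence (indeed isomorphism) of the associated categories. Where you go beyond the paper is in isolating the one step that genuinely needs an argument --- fullness, i.e.\ the ``only if'' half of that assertion --- and in observing, correctly, that it fails without a hypothesis on $f$: for $f=\mathrm{id}_X$ one has $Per_k(f)=X$ for every $k$, so all the objects $Per_n(f)$ coincide, $Per(f)$ collapses to a one-object category, and the relation ``$Per_k(f)\leqslant Per_n(f)$ iff $k\mid n$'' is not even well defined on the underlying sets. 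Your repair --- assuming $f$ has a periodic point of minimal period $d$ for every $d\in\mathbb Z^{+}$, so that a point of minimal period $m$ witnesses $Per_m(f)\not\subseteq Per_n(f)$ whenever $m\nmid n$ --- is exactly the hypothesis the proposition needs, and under it your argument is complete and upgrades to an isomorphism of categories. In short: same route as the paper, but your version is the careful one; as printed the proposition is false for general $f$, and the missing hypothesis on $f$ ought to be stated explicitly (the same remark applies verbatim to the later proposition about the spaces $V_n$ of solutions of $\mathfrak D^n f = f$, where the analogous injectivity does hold and so no extra hypothesis is needed there).
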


\subsection{Roots of unity}\label{roots}
Now let us consider the roots of the unit in the field $\mathbb C$ of complex numbers. Remember that an element $z$\ of\ $\mathbb C$ is called a root of unity if there exists an integer $n \geqslant 1$ such that $z^n =1$. There are at most $n$ such roots, and they obviously form  a group $R_n$, which is cyclic. In forward, we will denote by $\mathcal R$ the set of all these groups $\{ R_n\mid n\in\mathbb Z^{+}\} $

It is well known that given two such groups $R_k$ and $R_n$, \,\ $R_k$ is a subgroup of $R_n$ if and only if $k$ is a divisor of $n$. Therefore, we will say that $R_k\leqslant  R_n$\,\ if and only if $k$ is a divisor of $n$ and we obtain that $(\mathcal R, \leqslant )$ is a partially ordered set which, considered as a category, has all binary products and all binary coproducts. In this case, if  $R_m, R_n, R_k $ are objects of $\mathcal R$ then $ R_m\leqslant R_n$ if and only if there is a unique arrow $R_m\rightarrow R_n$, the coproduct of $R_m$ and $R_n$  is $R_k$, where $k$=lcm\{m,n\}, in other words, $R_m\vee R_n=R_{lcm\{m,n\}} $ and the product is $R_m\land R_n=R_{gcd\{m,n\}}$. 

We also have that $Per(f)$ is a distributive lattice.
Consequently,
\begin{prop}
The categories $\mathcal R$\ and\  $\mathbb Z^{+}_D$ are equivalent.
\end{prop}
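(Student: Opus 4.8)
The plan is to exhibit an explicit functor $\Phi : \mathbb Z^{+}_D \longrightarrow \mathcal R$ and verify that it is an equivalence; in fact it will be an isomorphism of categories, since both sides are thin (poset) categories. On objects I would set $\Phi(n) = R_n$, and on the unique arrow $k \to n$ that exists whenever $k \leqslant n$ (i.e. $k$ divides $n$) I would assign the unique arrow $R_k \to R_n$, namely the subgroup inclusion $R_k \hookrightarrow R_n$. Functoriality — preservation of identities and of composition — is automatic, because in a thin category there is at most one arrow between any two objects, so any two parallel composites necessarily coincide.

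The substantive point to check is that $\Phi$ is well defined and fully faithful, and this rests on the equivalence
\[
k \mid n \quad\Longleftrightarrow\quad R_k \text{ is a subgroup of } R_n,
\]
which the text has already recorded. For completeness I would recall why: $R_n$ is cyclic of order $n$, a cyclic group of order $n$ has exactly one subgroup of each order dividing $n$, and when $k \mid n$ that order-$k$ subgroup of $R_n$ is precisely $\{\,z \in \mathbb C : z^k = 1\,\} = R_k$; conversely a subgroup of $R_n$ has order dividing $n$, so $R_k \leqslant R_n$ forces $k \mid n$. Consequently the induced map on hom-sets $\mathrm{Hom}_{\mathbb Z^{+}_D}(k,n) \to \mathrm{Hom}_{\mathcal R}(R_k,R_n)$ carries the (at most one) arrow to the (at most one) arrow, and by the displayed equivalence the two hom-sets are simultaneously empty or simultaneously singletons; hence $\Phi$ is full and faithful.

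Finally, $\Phi$ is literally surjective on objects, since by definition $\mathcal R = \{\,R_n \mid n \in \mathbb Z^{+}\,\}$, so in particular it is essentially surjective. A fully faithful, essentially surjective functor is an equivalence of categories, which gives the claim. I do not anticipate any genuine obstacle here: the only step requiring care is the group-theoretic fact displayed above, and that is standard. One may additionally observe that $n \mapsto R_n$ is injective on objects, because $R_n$ has order $n$; thus $\Phi$ is in fact an isomorphism of categories, slightly strengthening what is asserted — and this is exactly parallel to the argument for $Per(f)$ in the previous subsection, with $R_k \leqslant R_n$ playing the role there played by $Per_k(f) \subseteq Per_n(f)$.
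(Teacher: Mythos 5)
Your proposal is correct and follows essentially the same route as the paper, which (without writing out a formal proof) relies on exactly the order isomorphism $n \mapsto R_n$ determined by the equivalence $k \mid n \Leftrightarrow R_k \leqslant R_n$. You simply make explicit the functoriality, full faithfulness, and surjectivity that the paper leaves implicit, and your added group-theoretic justification of the key equivalence is a welcome bit of extra rigor.
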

\begin{remark}
In the complex plane, the $n$ roots of unity correspond to the $n$ vertices of a regular $n$-sided polygon inscribed inside the unit circle, with one vertex at the point $z = 1$. The vertices are equally spaced on the circumference of the circle. The successive vertices are obtained by increasing the argument by an equal amount of $2\pi/n$ of the preceding vertex. Undoubtedly the collection of all such polygons make up another category equivalent to $\mathbb Z^{+}_D$.
\end{remark}
\subsection{Solutions of a differential equation}
 The combination of the results of \ref{periodic} y \ref{roots} allow us to analyze the topic of this subsection.

Let $I$ denote an open interval on the real line. The set of all complex-valued functions having $n$ continuous derivatives in $I$  is denoted by $\mathcal C^n(I)$. We are interested in finding all the solutions in $\mathcal C^n(I)$ of the differential equation
\be
\frac{d^n f}{dt}= f, \qquad \qquad f\in \mathcal C^n(I)
\ee
or writing in another way 
\be
\mathfrak D^n f= f,\qquad \qquad f\in \mathcal C^n(I).
\ee
That is to say, we are interested in finding the periodic points of the differential operator $\mathfrak D$.

For $n=1$, we have that $\mathfrak D f=f$ if and only if $f(t)= ke^t,\quad k\in \mathbb R$. In this way $V_1= \langle e^t \rangle=\{ke^t,\mid k\in \mathbb R\}$ is the real vector space of all solutions of $\mathfrak D f=f$.

In general, if $f(t)= e^{\omega t}$ then $\mathfrak D^n f(t)=\omega^n e^{\omega t}$; in consequence,\linebreak $\mathfrak D^n f(t)= f(t)$ if and only if $\omega^n=1$. i.e. $\omega$ is a root of the unit in the field $\mathbb C$ of complex numbers, and therefore,   $V_n= \langle e^t, e^{\omega t}\cdots, e^{\omega^{(n-1)} t} \rangle$ is the real vector space of all solutions of $\mathfrak D^n f=f$. In this paper, we will denote by $\mathcal V$ the set of all these vector spaces $\{ V_n\mid n\in\mathbb Z^{+}\} $
It is clear that $V_k$ is a subspace of $V_n$ iff $k\in \mathfrak D_n$. For this reason we will say that $V_k\leqslant  V_n$\,\ if and only if $k$ is a divisor of $n$ and we obtain that $(\mathcal V, \leqslant )$ is a partially ordered set which, considered as a category, has all binary products and all binary coproducts. In this case, if  $V_m, V_n, V_k $ are objects of $\mathcal V$ then $ V_m\leqslant V_n$ if and only if there is a unique arrow $V_m\rightarrow V_n$, the coproduct of $V_m$ and $V_n$  is $V_m\vee V_n=V_{lcm\{m,n\}} $ and the product is $V_m\land V_n=V_{gcd\{m,n\}}$. 
Thus,
\begin{prop}
The categories $\mathcal V$\ and \ $\mathbb Z^{+}_D$ are equivalent.
\end{prop}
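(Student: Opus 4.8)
The plan is to reduce the statement to the equivalences already obtained in \S\ref{periodic} and \S\ref{roots}, exploiting that each of the categories $\mathbb Z^{+}_D$, $Per(\mathfrak D)$, $\mathcal R$ and $\mathcal V$ is thin (there is at most one arrow between any two objects), so that an equivalence between two of them is nothing but an order-isomorphism of the underlying partially ordered sets, which then automatically carries $lcm$ to join and $gcd$ to meet exactly as recorded just before the statement.

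First I would introduce the comparison functor $\Phi\colon \mathbb Z^{+}_D\longrightarrow \mathcal V$, $n\mapsto V_n$, sending the unique arrow $m\to n$ (which exists precisely when $m$ divides $n$) to the unique arrow $V_m\to V_n$, i.e.\ the inclusion $V_m\hookrightarrow V_n$. To see that this is well defined and fully faithful, I would use that $V_n$ is described intrinsically as the solution space $\{f\mid \mathfrak D^n f=f\}$, whose characteristic polynomial is $x^n-1$; hence $V_m\subseteq V_n$ if and only if $x^m-1$ divides $x^n-1$, which holds if and only if $m$ divides $n$. (Concretely, for the less immediate direction: if $\zeta$ is a primitive $m$-th root of unity then $e^{\zeta t}\in V_m$, so $V_m\subseteq V_n$ forces $\zeta^{n}e^{\zeta t}=\mathfrak D^{n}(e^{\zeta t})=e^{\zeta t}$, that is $\zeta^{n}=1$ and $m\mid n$.) Thus $\mathrm{Hom}_{\mathbb Z^{+}_D}(m,n)\neq\emptyset \iff m\mid n \iff V_m\subseteq V_n \iff \mathrm{Hom}_{\mathcal V}(V_m,V_n)\neq\emptyset$, and since all non-empty hom-sets here are singletons, $\Phi$ is full and faithful.

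Next I would check that $\Phi$ is essentially surjective — in fact surjective on objects, immediately from $\mathcal V=\{V_n\mid n\in\mathbb Z^{+}\}$ — and injective on objects; the latter is the only point that is not pure bookkeeping. It amounts to the dimension count $\dim V_n=n$, so that $V_m=V_n$ implies $m=n$. That count follows from the standard fact that the exponential functions $e^{t},e^{\omega t},\dots,e^{\omega^{(n-1)}t}$ attached to the $n$ distinct $n$-th roots of unity are linearly independent (equivalently, the order-$n$ linear equation $\mathfrak D^n f=f$ has an $n$-dimensional solution space). Having a fully faithful, essentially surjective functor, I conclude that $\Phi$ is an equivalence, i.e.\ $n\mapsto V_n$ is an order-isomorphism $\mathbb Z^{+}_D\xrightarrow{\ \sim\ }\mathcal V$.

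The only genuine obstacle is this dimension count $\dim V_n=n$ (equivalently, the injectivity of $n\mapsto V_n$); every other step is the same thin-category argument already used for $Per(f)$ and $\mathcal R$. Alternatively one can bypass even that: since the condition $\mathfrak D^n f=f$ is exactly $f\in Per_n(\mathfrak D)$, the poset $\mathcal V$ is the poset $Per(\mathfrak D)$ of \S\ref{periodic}, whence $\mathcal V\cong\mathbb Z^{+}_D$ is an instance of the corresponding Proposition, with \S\ref{roots} supplying the explicit basis $e^{\omega^{j}t}$, $\omega^{n}=1$, of each $V_n$ and the identity $V_m=V_n\iff R_m=R_n\iff m=n$. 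In either presentation the conclusion is that $\mathcal V$ and $\mathbb Z^{+}_D$ are equivalent as categories.
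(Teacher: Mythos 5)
Your proof is correct and follows the same route the paper intends: the assignment $n\mapsto V_n$ is an order-isomorphism of posets, hence an equivalence of the corresponding thin categories. The paper offers no argument beyond asserting ``$V_k$ is a subspace of $V_n$ iff $k\in\mathfrak D_n$,'' so your contribution is to supply the one nontrivial direction (that $V_m\subseteq V_n$ forces $m\mid n$, via a primitive $m$-th root of unity) together with the observation that full faithfulness of a functor between posets already yields injectivity on objects; your alternative reduction, identifying $\mathcal V$ with $Per(\mathfrak D)$ so that the proposition of \S\ref{periodic} applies directly, is exactly the ``combination of \ref{periodic} and \ref{roots}'' the paper alludes to but does not carry out.
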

Note, for example,  that a Grothendieck topology  on  the category $\mathcal V$ is a function $J$ which assigns to each object $V_n$\,\ of\,\  $\mathcal V$ a collection $J(n)$ of sieves on $\mathcal V$, in such a way that
\begin{enumerate}
\item[(i)] the maximal sieve $\downarrow V_n=\{ V_k\mid k\in  \mathfrak D_n\}$ is in $J(n)$;
\item[(ii)] (stability axiom) if $S\in J(n)$ and $V_k\leqslant V_n$ then $S\cap \downarrow V_k$ is in $J(k)$;
\item[(iii)] (transitivity axiom) if $S\in J(n)$ and $R$ is any sieve on $V_n$ such that $R\cap\downarrow  V_k$ is in $J(k)$ for each $k\in S$, then $R\in J(n)$.
\end{enumerate}

Our final suggestion is to find in  elementary mathematics other categories equivalent to the previous ones.

\end{document}